 \newcommand{\filename}{29-Jan-2025-Trinion-Quantum-Horn.tex}  
\numberwithin{equation}{section}
\providecommand{\binom}[2]{{#1\choose#2}}
\newcommand{\Hom}{\operatorname{Hom}}
\renewcommand{\geq}{\geqslant}
\renewcommand{\leq}{\leqslant}
\newcommand{\K}{\mathrm{K}}                            
\newcommand{\CC}{\mathbb{C}} 
\newcommand{\RR}{\mathbb{R}} 
\newtheorem{theorem}{Theorem}[section]
\theoremstyle{definition}
\newtheorem{problem}[theorem]{Problem}
\begin{document}

\title[The multiplicative $\operatorname{SU}(2)$-eigenvalue problem for the trinion]{About the Bohr-Sommerfeld polytope and the multiplicative $\operatorname{SU}(2)$-eigenvalue problem for the trinion}
\author{Nathan Grieve}

\address{Department of Mathematics \& Statistics,
Acadia University, Huggins Science Hall, Room 130,
12 University Avenue,
Wolfville, NS, B4P 2R6
Canada; School of Mathematics and Statistics, Carleton University, 4302 Herzberg Laboratories, 1125 Colonel By Drive, Ottawa, ON, K1S 5B6, Canada; 
D\'{e}partement de math\'{e}matiques, Universit\'{e} du Qu\'{e}bec \`a Montr\'{e}al, Local PK-5151, 201 Avenue du Pr\'{e}sident-Kennedy, Montr\'{e}al, QC, H2X 3Y7, Canada; Department of Pure Mathematics, University of Waterloo, 200 University Avenue West, Waterloo, ON, N2L 3G1, Canada;  
}
\email{nathan.m.grieve@gmail.com}%

\begin{abstract} We study in detail the work of Jeffrey and Weitsman, especially \cite{Jeffrey:Weitsman:1992} and \cite{Jeffrey:Weitsman:1994}.  In particular, we describe the manner in which the inequalities that cut out the Bohr-Sommerfeld moment polytope give rise to a solution of the $\operatorname{SU}(2)$-multiplicative eigenvalue problem for the case of three conjugacy classes.
\end{abstract}
\thanks{
\emph{Mathematics Subject Classification (2020):} 14D20, 14H60, 58D27. \\
\emph{Key Words: Bohr-Sommerfeld moment polytope, multiplicative eigenvalue problem, trinion representation space.} 
\\
I thank the Natural Sciences and Engineering Research Council of Canada for their support through my grants DGECR-2021-00218 and RGPIN-2021-03821. \\
ORCID: https://orcid.org/0000-0003-3166-0039
\\
Date: \today.  \\
File name: \filename
}

\maketitle

\section{Introduction}

The purpose of the present note, is to describe the manner in which the inequalities that cut out the Bohr-Sommerfeld moment polytope, as discovered by Jeffrey and Weitsman, see \cite{Jeffrey:Weitsman:1992} and \cite{Jeffrey:Weitsman:1994}, give rise to a solution of the $\operatorname{SU}(2)$-multiplicative eigenvalue problem for the case of three conjugacy classes.

In order to help place matters into their proper context, let us first recall, as in \cite{Belkale:2001}, see also \cite{Agnihotri:Woodward:1998} and \cite{Teleman:Woodward:2003}, the statement of the multiplicative eigenvalue problem for the $n \times n$ special unitary group 
$$
\operatorname{SU}(n) := \left\{A  \in \CC^{n \times n} : AA^* = 1 \text{ and } \operatorname{det}(A) = 1 \right\} \text{.}
$$
Here, $A^*$ denotes the conjugate transpose of $A$. 

\begin{problem}[{\cite{Belkale:2001}, \cite{Agnihotri:Woodward:1998}, \cite{Teleman:Woodward:2003}}]\label{Multiplicative:SU:n:Eigen:Value:Problem}
Characterize the possible eigenvalues of those matrices 
$$
A_{1},\dots,A_{s} \in \operatorname{SU}(n)
$$
which satisfy the condition that
$$
A_{1} \cdot \hdots \cdot A_{s} = 1 \text{.}
$$
\end{problem}

Second, here we state \cite[Theorem 1.4.3]{Belkale:2022} as Theorem \ref{Belkale:multiplicative:horn} below.  A conceptual formulation is that the multiplicative $\operatorname{SU}(n)$-eigenvalue problem is controlled by the Grassmannian varieties' quantum Schubert calculus.  We do not pursue here a more detailed discussion of the quantum Schubert calculus and its interpretation in terms of the quantum Gromov-Witten intersection numbers.  However, we do mention that these quantum Gromov-Witten numbers arise in the statement of Theorem \ref{Belkale:multiplicative:horn}.  More details about the quantum Schubert calculus can be found in \cite[Sections 1.3 and 2.4]{Belkale:2008} and \cite{Bertram:Ciocan-Fontanine:Fulton:1999}, for example (among others). 

Before stating Theorem \ref{Belkale:multiplicative:horn} below, denote by
$$
\Delta(n) \subseteq \RR^n 
$$
the standard $n-1$-simplex.  Recall, that it consists of those
$$
\alpha = (\alpha_1,\dots,\alpha_n) \in \RR^n
$$
which have the two properties that
\begin{enumerate}
\item[(i)]{
$
\alpha_1 \geq \dots \geq \alpha_n \geq \alpha_1 - 1
$;
and
}
\item[(ii)]{
$\sum_{i=1}^n \alpha_\ell = 0$.
}
\end{enumerate}
Further, recall that $\Delta(n)$ may be regarded as the parameter space for $\operatorname{SU}(n)$-conjugacy classes by associating to each
$$
\alpha = (\alpha_1,\dots,\alpha_n) \in \Delta(n)
$$
the conjugacy class of the matrix
$$
\operatorname{diag}\left(\exp(2 \pi \sqrt{-1}\alpha_1),\dots,\exp(2 \pi \sqrt{-1}\alpha_n)\right) \in \operatorname{SU}(n) \text{.}
$$
Having fixed some notation and context, the main result about the multiplicative $\operatorname{SU}(n)$-eigenvalue problem can be formulated in the following way.

\begin{theorem}[Compare with {\cite[Theorem 1.1]{Belkale:2008}, \cite[Theorem 1.4.3]{Belkale:2022}}]\label{Belkale:multiplicative:horn}
Fix a collection of $\operatorname{SU}(n)$-conjugacy classes
$$
\alpha_1,\dots,\alpha_s 
$$
with
$$
\alpha_j = (\alpha_{1j},\dots,\alpha_{nj}) \in \Delta(n) \text{,}
$$
for $j = 1,\dots,s$.  Then, the following two assertions are equivalent.
\begin{enumerate}
\item[(i)]{
There exist matrices
$$
A_{1},\dots,A_{s} \in \operatorname{SU}(n)
$$
with $A_{j}$ in the conjugacy class $\alpha_j$ and such that 
$$
A_{1}\cdot \hdots \cdot A_{s} = 1 \text{.}
$$
}
\item[(ii)]{
For all integers $r,d$, with $0 < r < n$, $d \geq 0$ and  all $s$-tuples of cardinality $r$ subsets $I_1,\dots,I_s \in \binom{[n]}{r}$, for $[n] :=\{1,\dots,n\}$, such that the quantum Gromov-Witten intersection number $\langle \sigma_{I_1},\dots, \sigma_{I_s} \rangle_d = 1$, the following inequality is valid
$$
\sum_{j=1}^s \sum_{k \in I_j} \alpha_{kj} \leq d \text{.}
$$
}
\end{enumerate}
\end{theorem}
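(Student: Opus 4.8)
The plan is to translate the group-theoretic existence statement of (i) into a question about the nonemptiness of a symplectic quotient, and then to identify the resulting moment polytope with the region cut out by the inequalities of (ii). Concretely, a tuple of matrices $A_1,\dots,A_s \in \operatorname{SU}(n)$ lying in the prescribed conjugacy classes $\alpha_1,\dots,\alpha_s$ and satisfying $A_1\cdot\hdots\cdot A_s = 1$ is exactly a point of the representation variety of the fundamental group of the $s$-punctured two-sphere with prescribed local holonomies. By the theorems of Mehta-Seshadri and of Narasimhan-Seshadri, such representations are in bijection with semistable rank $n$, degree $0$ parabolic vector bundles on $\PP^1$ whose parabolic weights at the $s$ marked points are read off from the entries $\alpha_{kj}$. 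Thus (i) holds if and only if the moduli space of semistable parabolic bundles with these weights is nonempty, equivalently, if and only if the point $(\alpha_1,\dots,\alpha_s)$ lies in the associated moment polytope.

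The first thing I would verify is the necessity of the inequalities, that is, (i) $\Rightarrow$ (ii). Given a semistable parabolic bundle $E$ as above, each tuple $(I_1,\dots,I_s,d)$ for which the quantum intersection number $\langle \sigma_{I_1},\dots,\sigma_{I_s}\rangle_d$ is nonzero should be interpreted as the existence of a rank $r$ subbundle $F \subseteq E$ whose degree and whose fibres at the marked points are compatible, in the Schubert-position sense encoded by the $I_j$, with the parabolic flags. The nonvanishing of the Gromov-Witten number is precisely what guarantees that such a subbundle configuration can be produced; semistability of $E$ then forces the parabolic slope of $F$ not to exceed that of $E$, and unwinding this slope inequality yields
$$
\sum_{j=1}^s \sum_{k \in I_j} \alpha_{kj} \leq d \text{.}
$$
Here the value of the intersection number being equal to $1$ is singled out because it corresponds to a rigid, transversally unique configuration, which is what produces a sharp, facet-defining inequality rather than a redundant one.

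The reverse implication (ii) $\Rightarrow$ (i) is the substantive half, and I would approach it by showing that the finite list of inequalities indexed by $\langle \sigma_{I_1},\dots,\sigma_{I_s}\rangle_d = 1$ already carves out the entire moment polytope, so that their validity forces $(\alpha_1,\dots,\alpha_s)$ into the polytope and hence produces the desired bundle and representation. The governing principle is the general description of moment polytopes for projective group actions through the Hilbert-Mumford numerical criterion: the walls of the polytope are indexed by one-parameter subgroups, which on $\PP^1$ organize themselves into Schubert data on Grassmannians, while the integer $d$ records the twisting needed to account for maps of $\PP^1$ into the Grassmannian. Passing from the classical intersection ring to the quantum cohomology ring of the Grassmannian is exactly the device that packages these twisted configurations, and the deformed product is what renders the inequality list simultaneously complete and minimal.

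The hardest part will be establishing the precise dictionary between the nonvanishing, respectively equal to $1$, quantum Gromov-Witten numbers and the existence of the relevant subbundles on $\PP^1$, together with the claim that the value $1$ isolates exactly the irredundant inequalities. Proving completeness, that no facet of the polytope is missed, requires an inductive reduction on the rank $n$ together with a transversality or deformation argument showing that each essential wall genuinely comes from a quantum Schubert class with intersection number $1$; and proving minimality requires ruling out that classes with intersection number strictly larger than $1$ contribute any new constraint. This is precisely the content of the deep work of Belkale and of Teleman-Woodward, and it is where the quantum Schubert calculus enters in an essential, rather than merely formal, way.
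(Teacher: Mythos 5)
First, a point of calibration: the paper does not actually prove this statement. Theorem \ref{Belkale:multiplicative:horn} is imported background, stated with the attribution ``Compare with \cite[Theorem 1.1]{Belkale:2008}, \cite[Theorem 1.4.3]{Belkale:2022}'' and never argued; the paper's own proofs concern only the $\operatorname{SU}(2)$ trinion results (Theorems \ref{SU:2:Trinion:eigenvalue:thm} and \ref{SU:2:Trinion:Moment:Map:Thm}), which are obtained by the elementary Jeffrey--Weitsman holonomy-angle computation, not by parabolic bundles or quantum Schubert calculus. So there is no in-paper argument to compare yours against; the relevant benchmark is the cited literature, and your outline does faithfully reproduce its architecture: the Mehta--Seshadri correspondence converting (i) into nonemptiness of a moduli space of parabolic degree-zero semistable bundles on $\mathbb{P}^1$, parabolic semistability applied to a rank $r$, degree $-d$ subbundle in prescribed Schubert position giving necessity, and the quantum Schubert calculus controlling the walls of the moment polytope for sufficiency. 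This is exactly the Agnihotri--Woodward/Belkale route.

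As a proof, however, your text has a genuine gap that you yourself flag: every load-bearing step is deferred to ``the deep work of Belkale and of Teleman--Woodward,'' which is the very theorem being proved, so the argument is circular rather than incomplete in a repairable way. Concretely, three items are asserted but not established. (1) The dictionary from $\langle \sigma_{I_1},\dots,\sigma_{I_s}\rangle_d \neq 0$ to the existence of a subbundle in the stated Schubert positions holds a priori only for \emph{generic} flags; transporting it to the particular flags coming from your bundle $E$ requires a properness/degeneration argument on the Quot scheme, which you do not supply. (2) The sufficiency direction with the list restricted to invariants \emph{equal to} $1$ is not a formal consequence of Hilbert--Mumford generalities: Agnihotri--Woodward obtain the polytope from all \emph{nonzero} invariants, and the reduction to the subset with value $1$ is precisely Belkale's quantum Horn theorem, proved by an induction involving tangent-space and transversality arguments at a rigid point; your ``rigid, transversally unique configuration'' remark gestures at this but proves nothing. (3) Your claim that the value $1$ isolates ``facet-defining'' inequalities conflates sufficiency of the sublist with irredundancy, which is a separate (and later) result. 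Note also a small slip in the necessity direction: the slope inequality there holds for every nonvanishing invariant, and the restriction to value $1$ plays no role in that half at all.
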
 

Here, we study work of Jeffrey and Weitsman, especially \cite{Jeffrey:Weitsman:1992} and \cite{Jeffrey:Weitsman:1994}.  In doing so, we obtain, via the $\operatorname{SU}(2)$-representation theory of the trinion, a solution to the smallest nontrivial instance of  Problem \ref{Multiplicative:SU:n:Eigen:Value:Problem}.  This is the content of Theorem \ref{SU:2:Trinion:eigenvalue:thm} below.  While, in many ways this result is fairly well known, it remains instructive to revisit it in detail.

Prior to our formulation of Theorem \ref{SU:2:Trinion:eigenvalue:thm}, let us first recall,  that the set of $\operatorname{SU}(2)$-conjugacy classes is in bijective correspondence with the set of $2 \times 2$ special unitary matrices
$$
\left\{C(\theta) := 
 \left( 
 \begin{matrix}
 \cos(\theta) + \sqrt{-1} \sin(\theta) & 0 \\
 0 & \cos(\theta) - \sqrt{-1} \sin(\theta)
 \end{matrix}
 \right) : \theta \in [0,\pi] \right\} \text{.}
$$

Having fixed this notation, Theorem \ref{SU:2:Trinion:eigenvalue:thm} may be formulated in the following way.

\begin{theorem}\label{SU:2:Trinion:eigenvalue:thm}
Fix three $\operatorname{SU}(2)$-conjugacy classes $\mathrm{C}(\theta_1), \mathrm{C}(\theta_2), \mathrm{C}(\theta_3)$ corresponding to 
$$\theta_i \in [0,\pi] \text{ for $i = 1,2,3$.}$$ 
Then, there exists a collection of $2 \times 2$ special unitary matrices 
$$A_1, A_2, A_3 \in \operatorname{SU}(2)$$ 
which satisfy the two conditions that
\begin{enumerate}
\item[(i)]{
$A_i \in \mathrm{C}(\theta_i)$, for $i = 1,2,3$; and
}
\item[(ii)]{
$A_1 \cdot A_2 \cdot A_3 = \left( \begin{matrix}
1 & 0 \\
0 & 1
\end{matrix}
\right) \text{;} $
}
\end{enumerate}
if and only if there exists a three tuple
$$
\overrightarrow{\theta} = (\theta_1,\theta_2,\theta_3) \in [0,\pi]^3 
$$
which lies in the tetrahedron
$$
\Delta \subseteq \RR^3
$$
which is inscribed in the cube
$$
\left\{ \overrightarrow{\theta} = (\theta_1, \theta_2, \theta_3) : 0 \leq \theta_i \leq \pi \text{ for $i = 1,2,3$} \right\} \subseteq \RR^3
$$
and, in particular, which is described by the following four inequalities
\begin{itemize}
\item{$\theta_1 - \theta_2 \leq \theta_3$; }
\item{$-\theta_1 + \theta_2 \leq \theta_3$; }
\item{ $\theta_3 \leq \theta_1 + \theta_2$; and}
\item{ $\theta_3 \leq 2 \pi - (\theta_1 + \theta_2)$\text{.}}
\end{itemize}
\end{theorem}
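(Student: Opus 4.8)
The plan is to reduce the three-factor relation $A_1A_2A_3 = 1$ to a question about the $\operatorname{SU}(2)$-conjugacy class of a product of two elements, and then to pin down that class by a direct trace computation.

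First I would rewrite condition (ii) as $A_1A_2 = A_3^{-1}$. For $\operatorname{SU}(2)$ the conjugacy class of an element is determined by its trace, and $C(\theta)^{-1} = C(-\theta)$ has the same trace $2\cos\theta$ as $C(\theta)$; hence each class $C(\theta_i)$ is stable under inversion. Consequently the existence of $A_1, A_2, A_3$ satisfying (i) and (ii) is equivalent to the existence of $A_1 \in C(\theta_1)$ and $A_2 \in C(\theta_2)$ with $A_1A_2 \in C(\theta_3)$. Conjugating the entire relation by a single element of $\operatorname{SU}(2)$, I may assume that $A_1 = C(\theta_1)$ is diagonal. Thus the problem reduces to describing the set of traces $\tr\bigl(C(\theta_1)A_2\bigr)$ as $A_2$ ranges over the conjugacy class $C(\theta_2)$.

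Next I would perform this computation explicitly. A general element of $C(\theta_2)$ has the form $\begin{pmatrix} \cos\theta_2 + \sqrt{-1}\,x & b \\ -\overline{b} & \cos\theta_2 - \sqrt{-1}\,x \end{pmatrix}$ with $x \in \RR$, $b \in \CC$, subject to the determinant constraint $x^2 + |b|^2 = \sin^2\theta_2$; in particular $x$ ranges over the full interval $[-\sin\theta_2, \sin\theta_2]$, with $b$ chosen to saturate the norm relation. A short multiplication then gives
\[
\tr\bigl(C(\theta_1)A_2\bigr) = 2\cos\theta_1\cos\theta_2 - 2x\sin\theta_1,
\]
which is an affine, hence continuous, function of $x$. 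By the intermediate value theorem its image is exactly the closed interval with endpoints $2\cos(\theta_1+\theta_2)$ (at $x = \sin\theta_2$) and $2\cos(\theta_1-\theta_2)$ (at $x = -\sin\theta_2$), that is $[\,2\cos(\theta_1+\theta_2),\, 2\cos(\theta_1-\theta_2)\,]$. Finally I would translate the requirement $\tr(A_1A_2) = 2\cos\theta_3$ into the four displayed inequalities: the membership $2\cos\theta_3 \in [\,2\cos(\theta_1+\theta_2),\, 2\cos(\theta_1-\theta_2)\,]$ unwinds, using that $\cos$ is strictly decreasing on $[0,\pi]$ together with $\arccos(\cos(\theta_1-\theta_2)) = |\theta_1-\theta_2|$ and $\arccos(\cos(\theta_1+\theta_2)) = \min(\theta_1+\theta_2,\, 2\pi - \theta_1 - \theta_2)$, into the chain $|\theta_1-\theta_2| \leq \theta_3 \leq \min(\theta_1+\theta_2,\, 2\pi - \theta_1 - \theta_2)$. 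Splitting the lower bound into $\theta_1-\theta_2 \leq \theta_3$ and $-\theta_1+\theta_2 \leq \theta_3$, and the upper bound into $\theta_3 \leq \theta_1+\theta_2$ and $\theta_3 \leq 2\pi-(\theta_1+\theta_2)$, yields precisely the four inequalities cutting out $\Delta$.

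The step that I expect to require the most care is the surjectivity of the trace onto the whole interval, that is, the claim that every intermediate conjugacy class really is realized, not merely the two extreme ones. This is exactly where the connectedness of $C(\theta_2)$ (a two-sphere for $0 < \theta_2 < \pi$, a single point when $\theta_2 \in \{0,\pi\}$) feeds the intermediate value argument. I would separately verify the degenerate cases $\theta_i \in \{0,\pi\}$, where a conjugacy class collapses to $\{\pm I\}$ and the trace interval may shrink to a point, to confirm that the four inequalities still record the correct answer. An alternative, coordinate-free derivation of the same interval proceeds through the double cover $\operatorname{SU}(2) \to \operatorname{SO}(3)$ and the quaternion composition law $\cos\phi = \cos\theta_1\cos\theta_2 - \sin\theta_1\sin\theta_2\cos\gamma$, letting the angle $\gamma$ between the two rotation axes sweep $[0,\pi]$.
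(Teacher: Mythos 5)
Your proposal is correct and follows essentially the same route as the paper: the paper likewise reduces to asking when $g_1g_2 \in \mathrm{C}(\theta_3)$ with $g_1$ diagonalized, parameterizes the class of $g_2$ (by an angle $\beta$, so your coordinate $x$ is exactly the paper's $\sin(\theta_2)\cos(\beta)$), extracts the trace condition $\cos(\theta_1+\theta_2) \leq \cos(\theta_3) \leq \cos(\theta_1-\theta_2)$, and unwinds it via monotonicity of $\cos$ on $[0,\pi]$ into the four inequalities. Your explicit justifications of the inversion-stability $\mathrm{C}(\theta)^{-1} = \mathrm{C}(\theta)$ (which the paper uses implicitly when passing from $A_1A_2A_3 = 1$ to $A_1A_2 \in \mathrm{C}(\theta_3)$) and of the degenerate cases $\theta_i \in \{0,\pi\}$ are welcome refinements, not deviations.
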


The proof of Theorem \ref{SU:2:Trinion:eigenvalue:thm} is based on the result of Jeffrey and Weitsman \cite[Proposition 3.1]{Jeffrey:Weitsman:1992}.  Indeed, as a complementary preliminary result, in Theorem \ref{SU:2:Trinion:Moment:Map:Thm} we described the bijective correspondence between the set of equivalence classes of representations 
$$
\rho \colon \pi_1 \rightarrow \operatorname{SU}(2)
$$
and the image of the moment map
$$
\theta : \mathcal{M} := \Hom(\pi_1, \operatorname{SU}(2)) / \operatorname{SU}(2) \rightarrow [0,\pi]^3 \text{.}
$$
Theorem \ref{SU:2:Trinion:eigenvalue:thm} is then deduced as an application of Theorem \ref{SU:2:Trinion:Moment:Map:Thm}.  As an additional comment, we note that Theorems \ref{Belkale:multiplicative:horn} and \ref{SU:2:Trinion:eigenvalue:thm} complement \cite[Section 3 (ii)]{Hurtubise:Jeffrey:2000}.

\subsection*{Acknowledgements}  It is my pleasure to thank colleagues for their interest in this work and for discussions and correspondence on related topics.  Further, I thank the Natural Sciences and Engineering Research Council of Canada for their support through my grants DGECR-2021-00218 and RGPIN-2021-03821.

\section{Preliminaries about the $\operatorname{SU}(2)$-trinion representation space}
We fix our conventions about the three punctured sphere and its $\operatorname{SU}(2)$-representation space.  They resemble closely the approach from \cite{Jeffrey:Weitsman:1992}, \cite{Jeffrey:Weitsman:1994} and \cite{Goldman:1984}.

By a \emph{trinion}, is meant a copy of the two-punctured disc
$$
D = \{z \in \CC : |z|<2\} \setminus \left( \{z : |z-1| < 1/2 \} \bigcup \{ z : |z+1| < 1/2 \} \right)
$$
together with marked points on the boundary components of $D$; the standard orientation is induced from that of the complex plane $\CC$.  Each such  trinion is homeomorphic to the three punctured sphere.

Consider the group $\pi_1$, that is generated by the three symbols $c_1,c_2,c_3$ and subject to the defining relation that 
$$
c_1 \cdot c_2 \cdot c_3 = 1 \text{.}
$$
Then $\pi_1$ is the fundamental group of the \emph{trinion}.  Moreover, consider its $\operatorname{SU}(2)$-representation space
$$
\mathcal{M} := \Hom(\pi_1, \operatorname{SU}(2)) / \operatorname{SU}(2) \text{.}
$$
Here, the quotient is taken with respect to conjugation.

For $i = 1,2,3$, define the angle functions
$$
\theta_i(\cdot) \colon \Hom(\pi_1,\operatorname{SU}(2)) \rightarrow [0,\pi]
$$
by the condition that
$$
\theta_i(\rho) = \operatorname{arccos}\left( \frac{1}{2} \operatorname{Tr} \rho(c_i)\right) \text{.}
$$
These functions $\theta_i(\cdot)$ are well-defined modulo the conjugation action of $\operatorname{SU}(2)$ on $\Hom(\pi_1,\operatorname{SU}(2))$.

By abuse of notation, denote the induced maps by
$$
\theta_i(\cdot) \colon \mathcal{M} \rightarrow [0,\pi] \text{.}
$$
Put
\begin{equation}\label{moment:map}
\theta = (\theta_1(\cdot),\theta_2(\cdot),\theta_3(\cdot)) \colon \mathcal{M} \rightarrow [0,\pi]^3 \text{.}
\end{equation}
Our aim, in what follows, is to describe the image of the map \eqref{moment:map}.  In doing so, we will improve our understanding of the following interesting result from \cite{Jeffrey:Weitsman:1992} (see also \cite{Jeffrey:Weitsman:1994}).
 
\begin{theorem}\label{SU:2:Trinion:Moment:Map:Thm}
Let $\pi_1$ be the group that is generated by the three symbols $c_1, c_2, c_3$ and subject to the defining relation that 
$$
c_1 \cdot c_2 \cdot c_3 = 1 \text{.}
$$
Then there is a bijective correspondence between the set of equivalence classes of representations 
$$
\rho \colon \pi_1 \rightarrow \operatorname{SU}(2) 
$$
and the image of the moment map
$$
\theta = (\theta_1(\cdot), \theta_2(\cdot), \theta_3(\cdot)) \colon \mathcal{M} \rightarrow [0,\pi]^3 \text{,}
$$
$$
\theta_i([\rho]) = \operatorname{arccos}\left( \frac{1}{2} \operatorname{Tr} \rho(c_i)\right) \text{.}
$$
for
$$
[\rho] \in \mathcal{M} := \Hom(\pi_1, \operatorname{SU}(2)) / \operatorname{SU}(2)
$$
the equivalence class of a given representation 
$$
\rho \in \Hom(\pi_1, \operatorname{SU}(2)) \text{.}
$$
\end{theorem}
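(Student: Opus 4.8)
The plan is to reduce the statement to a single injectivity assertion. Surjectivity onto the image is tautological, and the well-definedness of $\theta$ on $\mathcal{M}$ is exactly the conjugation-invariance of the trace noted above, so the only content is that two representations with the same triple of angles are conjugate. Because the defining relation $c_1 c_2 c_3 = 1$ forces $\rho(c_3) = (\rho(c_1)\rho(c_2))^{-1}$, a representation $\rho$ is determined by the pair $(A_1,A_2) := (\rho(c_1),\rho(c_2)) \in \operatorname{SU}(2) \times \operatorname{SU}(2)$; and since every element of $\operatorname{SU}(2)$ has real trace equal to that of its inverse, the three angles are
$$
\theta_1 = \arccos(\tfrac12 \Tr A_1), \quad \theta_2 = \arccos(\tfrac12 \Tr A_2), \quad \theta_3 = \arccos(\tfrac12 \Tr(A_1 A_2)) \text{.}
$$
Thus it suffices to show that $\Tr A_1$, $\Tr A_2$, $\Tr(A_1 A_2)$ form a complete system of invariants for pairs $(A_1,A_2)$ under simultaneous conjugation by $\operatorname{SU}(2)$.

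First I would record the elementary fact that two elements of $\operatorname{SU}(2)$ are conjugate precisely when they have equal trace, since the eigenvalues of such an element are a conjugate pair $e^{\pm \sqrt{-1}\theta}$ determined by $\Tr = 2\cos\theta$. In particular $\theta_i$ alone fixes the conjugacy class of each individual $A_i$, so the entire issue is the \emph{relative} position of $A_1$ and $A_2$.

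The heart of the argument is an explicit normal form on the irreducible locus. Assuming $A_1 \neq \pm I$, I would conjugate so that $A_1 = \operatorname{diag}(e^{\sqrt{-1}\theta_1}, e^{-\sqrt{-1}\theta_1})$ with $\theta_1 \in (0,\pi)$, the eigenvalue-ordering ambiguity being absorbed by a Weyl-group conjugation. Writing
$$
A_2 = \begin{pmatrix} a & b \\ -\bar b & \bar a \end{pmatrix}, \qquad |a|^2 + |b|^2 = 1 \text{,}
$$
the residual centralizer of $A_1$ is the diagonal torus, and conjugation by $\operatorname{diag}(e^{\sqrt{-1}\phi}, e^{-\sqrt{-1}\phi})$ sends $b \mapsto e^{2\sqrt{-1}\phi} b$, so I may arrange $b \geq 0$ real. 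In this normal form one has $\Tr A_1 = 2\cos\theta_1$, $\Tr A_2 = 2\operatorname{Re}(a)$, and $\Tr(A_1 A_2) = 2\operatorname{Re}(e^{\sqrt{-1}\theta_1} a) = 2\bigl(\cos\theta_1\operatorname{Re}(a) - \sin\theta_1\,\Im(a)\bigr)$. Since $\sin\theta_1 \neq 0$, these three equations determine $\operatorname{Re}(a)$ and $\Im(a)$, hence $a$, hence $|b|^2 = 1 - |a|^2$, and therefore the non-negative real number $b$; so $(A_1,A_2)$ is reconstructed uniquely from the angles, giving injectivity on the irreducible locus.

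The main obstacle, and where I would spend the most care, is the degenerate stratum: the cases $A_1 = \pm I$ or $A_2 = \pm I$, and the reducible case $b = 0$ in which $A_1$ and $A_2$ are simultaneously diagonal (equivalently, the representation is abelian). There the normal form is either unavailable or carries extra residual freedom, so these must be treated by hand. When a generator is central the representation reduces to a single element whose class is fixed by its trace; in the abelian case, after normalizing $A_1$ one has $A_2 = \operatorname{diag}(e^{\sqrt{-1}\alpha}, e^{-\sqrt{-1}\alpha})$ with $\theta_2 = |\alpha|$ leaving only the sign of $\alpha$ undetermined, and the remaining value $\Tr(A_1 A_2) = 2\cos(\theta_1 + \alpha)$ distinguishes $\alpha$ from $-\alpha$ except when $\sin\theta_1\sin\alpha = 0$ (which returns one to the central case). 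Assembling the irreducible locus with this boundary analysis yields injectivity in all cases, and together with the automatic surjectivity onto the image this establishes the claimed bijection.
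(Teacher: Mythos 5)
Your proposal is correct and takes essentially the same route as the paper: the paper likewise conjugates $\rho(c_1)$ into the diagonal torus, uses the residual centralizer to normalize $\rho(c_2)$ (its choice $\sin(\theta_2)\sin(\beta) \geq 0$ in the parametrization of $g_2$ is exactly your normalization $b \geq 0$), and recovers the remaining parameter uniquely from the trace relation $\cos(\theta_3) = \cos(\theta_1)\cos(\theta_2) - \sin(\theta_1)\sin(\theta_2)\cos(\beta)$, which is your linear system for $\operatorname{Re}(a)$ and $\Im(a)$. If anything, your explicit case analysis of the degenerate strata ($A_i = \pm I$ and the abelian case $b = 0$) is more careful than the paper's brief assertion that uniqueness ``remains true in the degenerate case $\sin(\theta_1) = 0$,'' while you reasonably omit the paper's additional computation identifying the image as the inscribed tetrahedron, which is not needed for the bijection statement itself.
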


We prove Theorem \ref{SU:2:Trinion:Moment:Map:Thm} in Section \ref{image:moment:map}.  But first, 
in order to fix the main idea for what follows, note that if
$$
g_i := 
\left(
\begin{matrix}
\cos(\theta_i) + \sqrt{-1} \sin(\theta_i) & 0 \\
0 & \cos(\theta_i) - \sqrt{-1} \sin(\theta_i) 
\end{matrix}
\right) \text{, }
$$
for $i = 1,2,3$, then
\begin{align*}
\theta_i & = \arccos \left( \frac{1}{2} \operatorname{Tr}(g_i) \right) \\
& = \arccos \left( \frac{1}{2}(2 \cos(\theta_i) ) \right) \text{.}
\end{align*}
Thus, the \emph{angle functions} encode the \emph{character} of the \emph{conjugacy classes} of representations 
$$\rho \in \Hom(\pi_1, \operatorname{SU}(2)) \text{.} $$ 

Finally, we remark that Theorem \ref{SU:2:Trinion:Moment:Map:Thm} implies Theorem \ref{SU:2:Trinion:eigenvalue:thm}.

\begin{proof}[Proof of Theorem \ref{SU:2:Trinion:eigenvalue:thm}]
Because of the defining relations of the trinion fundamental group $\pi_1$,  there exists a bijective correspondence between the conjugacy class of a representation
$$
\rho \colon \pi_1 \rightarrow \operatorname{SU}(2)
$$
and data that consists of three conjugacy classes $\mathrm{C}(\theta_1), \mathrm{C}(\theta_2), \mathrm{C}(\theta_3)$, corresponding to 
$$\theta_i \in [0,\pi]\text{,}$$ 
for $i=1,2,3$, together with matrices
$$
A_1,A_2,A_3 \in \operatorname{SU}(2) \text{,}
$$
which have the property that 
$$A_i \in \mathrm{C}(\theta_i) \text{,} $$
for $i = 1,\dots,3$, and 
$$
A_1 \cdot A_2 \cdot A_3 \text{.}
$$
Indeed, this bijective correspondence is set up, by setting $\mathrm{C}(\theta_i)$ to be the conjugacy class of the matrix
$$
A_i := \rho(\mathrm{c}_i) \text{,}
$$
for $i = 1,2,3$.
\end{proof}

\section{Image of the moment map}\label{image:moment:map}

As will become apparent in the proof of Theorem \ref{SU:2:Trinion:Moment:Map:Thm}, the image of the moment map
$$
\theta  = (\theta_1,\theta_2,\theta_3) \colon \mathcal{M} \rightarrow [0,\pi]^3 
$$
is the tetrahedron that is described by the following four inequalities
\begin{itemize}
\item{
$
\theta_1 - \theta_2 \leq \theta_3 \text{;}
$
}
\item{
$
- \theta_1 + \theta_2 \leq \theta_3 \text{;}
$
}
\item{
$
\theta_3 \leq \theta_1 + \theta_2 \text{; and}
$
}
\item{
$
\theta_3 \leq 2\pi - (\theta_1 + \theta_2) \text{.}
$
}
\end{itemize}
Moreover, as noted in \cite[Lemma 3.5]{Jeffrey:Weitsman:1992}, under the moment map $\theta$, the reducible representations correspond to those 
$$
\theta = (\theta_1,\theta_2,\theta_3) \in [0,\pi]^3
$$
which fail to lie in the interior of the moment map; such three tuples satisfy at least one of the following equations
\begin{itemize}
\item{
$\theta_1 + \theta_2 - \theta_3 = 0$;
}
\item{
$\theta_2 + \theta_3 - \theta_1 = 0$;
}
\item{
$\theta_3 + \theta_1 - \theta_2 = 0$; and
}
\item{
$\theta_1 + \theta_2 + \theta_3 = 2 \pi$.
}
\end{itemize}

\begin{proof}[Proof of Theorem \ref{SU:2:Trinion:Moment:Map:Thm}]

The proof of Theorem \ref{SU:2:Trinion:Moment:Map:Thm} follows, closely, that of \cite[Proposition 3.1]{Jeffrey:Weitsman:1992}.  Specifically, the strategy of proof is to derive the condition on a triple of angles
$$
\theta = (\theta_1,\theta_2,\theta_3) \in [0,\pi]^3 \text{,}
$$
so that they arise as \emph{holonomy angles} of some flat connection on the trinion.

In particular, in what follows, we study the conjugacy classes
$$
\mathrm{C}(\theta_i) \in \operatorname{SU}(2) \text{.}
$$
We want to study the condition on \emph{the angles} $\theta_i$, for $i = 1,2,3$, for there to exist
$$
g_i \in \mathrm{C}(\theta_i)
$$
which satisfy the condition that $g_1 g_2$ is conjugate to $g_3$.  

By conjugation, without loss of generality, we may assume that
$$
g_1 :=
\left(
\begin{matrix}
\cos(\theta_1) + \sqrt{-1} \sin(\theta_1) & 0 \\
0 & \cos(\theta_1) - \sqrt{-1} \sin(\theta_1)
\end{matrix}
\right) \in \mathrm{C}(\theta_1)
$$
and
$$
g_2 := 
\left(
\begin{matrix}
\cos(\theta_2) + \sqrt{-1} \sin(\theta_2)\cos(\beta) & \sin(\theta_2)\sin(\beta) \\
-\sin(\theta_2)\sin(\beta) & \cos(\theta_2) - \sqrt{-1} \sin(\theta_2)\cos(\beta) 
\end{matrix}
\right) \in \mathrm{C}(\theta_2) \text{, }
$$
where 
$$\beta \in [0,2\pi] \text{.}$$

The condition of interest is that
$$
g_1 \cdot g_2 \in \mathrm{C}(\theta_3) \text{.}
$$

Now, observe  that
\begin{align*}
g_1 \cdot g_2 & = 
\left(
\begin{matrix}
a & b \\
c & d
\end{matrix}
\right)
\end{align*}
where
$$
a = \cos(\theta_1)\cos(\theta_2) - \sin(\theta_1)\sin(\theta_2)\cos(\beta) + \sqrt{-1} (\sin(\theta_1)\cos(\theta_2) + \cos(\theta_1)\sin(\theta_2)\cos(\beta) ) \text{;}
$$
$$
b = \cos(\theta_1)\sin(\theta_2)\sin(\beta) + \sqrt{-1} \sin(\theta_1)\sin(\theta_2)\sin(\beta) \text{;}
$$
$$
c = - \cos(\theta_1)\sin(\theta_2)\sin(\beta) + \sqrt{-1} \sin(\theta_1)\sin(\theta_2)\sin(\beta) \text{;}
$$
and
$$
d = \cos(\theta_1)\cos(\theta_2) - \sin(\theta_1)\sin(\theta_2)\cos(\beta) - \sqrt{-1} ( \sin(\theta_1)\cos(\theta_2) + \cos(\theta_1)\sin(\theta_2) \cos(\beta)) \text{.}
$$

It follows that 
$$
g_1 \cdot g_2 \cdot g_3 = 1
$$
for
$$
g_3 = \frac{1}{ad-bc} \cdot \left( 
\begin{matrix}
d & - b \\
-c & a
\end{matrix}
\right) \text{.}
$$

Moreover, 
the characteristic polynomial of the matrix $g_1 \cdot g_2$ is 
$$
T^2 - 2 (\cos(\theta_1)\cos(\theta_2) - \sin(\theta_1)\sin(\theta_2)\cos(\beta)) T + 1 \text{.}
$$
In light of these calculations, the condition that 
$$
g_1 \in \mathrm{C}(\theta_1) \text{ and } g_2 \in \mathrm{C}(\theta_2)
$$
have the property that
$$
g_1 \cdot g_2 \in \mathrm{C}(\theta_3)
$$
becomes the condition that
$$
\cos(\theta_1)\cos(\theta_2) - \sin(\theta_1)\sin(\theta_2)\cos(\beta) = \cos(\theta_3) \text{.}
$$

Evidently, since 
$$\cos(\beta) \in [-1,1]\text{,}$$ 
the above equation admits a solution for 
$$\beta \in [0, 2\pi]$$ 
if and only if the following two inequalities 
$$
\cos(\theta_3) \leq \sin(\theta_1)\sin(\theta_2) + \cos(\theta_1)\cos(\theta_2)
$$
and
$$
\cos(\theta_1)\cos(\theta_2) - \sin(\theta_1)\sin(\theta_2) \leq \cos(\theta_3)
$$
hold true.

By the cosine addition identities, these two inequalities may be expressed more succinctly as
$$
\cos(\theta_1 + \theta_2) \leq \cos(\theta_3) \leq \cos(\theta_1 - \theta_2) \text{.}
$$

Recall, we are looking for angles 
$$\theta_i \in [0,\pi] \text{.}$$  
Moreover, $\cos(\cdot)$ is a decreasing function on $[0,\pi]$.  Thus, the above identity 
$$
\cos(\theta_1 + \theta_2) \leq \cos(\theta_3) \leq \cos(\theta_1 - \theta_2)
$$
may equivalently be expressed as the three conditions that
$$
|\theta_1 - \theta_2| \leq \theta_3 \text{;}
$$
$$
\theta_3 \leq \theta_1 + \theta_2 \text{ if $|\theta_1 + \theta_2| \leq \pi$; and}
$$
$$
\theta_3 \leq 2 \pi - (\theta_1 + \theta_2) \text{ if $|\theta_1 + \theta_2| > \pi$.}
$$
Indeed, the first condition arises because of the constraint 
$$\cos(\theta_3) \leq \cos(\theta_1 - \theta_2)$$ 
whereas the last two conditions arise because of the constraint
$$
\cos(\theta_1 + \theta_2) \leq \cos(\theta_3) \text{.}
$$

The holonomy condition is thus
$$
|\theta_1 - \theta_2 | \leq \theta_3 \leq \min \{ \theta_1 + \theta_2, 2 \pi - (\theta_1 + \theta_2)  \} \text{.}
$$
It defines a tetrahedron that is inscribed in the cube 
$$0 \leq \theta_i \leq \pi \text{,}$$
for $i = 1,2,3$.

Finally, let us establish that the map
$$
\theta = (\theta_1(\cdot), \theta_2(\cdot), \theta_3(\cdot)) \colon \mathcal{M} \rightarrow [0,\pi]^3 
$$
is injective.  

First, in the above discussion, we have implicitly chosen 
$$
\sin(\theta_2) \sin(\beta) \geq 0 \text{.}
$$
Moreover, 
$$
\sin(\theta_2) \sin(\beta) \geq 0
$$
is uniquely determined if
$$
\sin(\theta_1) \not = 0 \text{.}
$$

Thus, the given equation for $g_2$ has a unique solution in terms of $\cos(\theta_1)$, $\cos(\theta_2)$ and $\cos(\theta_3)$.  This remains true in the degenerate case that 
$$
\sin(\theta_1) = 0 \text{.}
$$

In particular, by the above discussion, the holonomy functions $\theta_i(\cdot)$, for $i = 1,2,3$, may be considered as coordinates on the space of conjugacy classes of representations of the trinion fundamental group.

Finally, observe that the equation
$$
|\theta_1 - \theta_2| \leq \theta_3 \leq \min \{ \theta_1 + \theta_2, 2\pi - (\theta_1 + \theta_2) \}
$$
defines a tetrahedron that is inscribed in the cube
$$
0 \leq \theta_i \leq \pi \text{, for $i = 1,2,3$.}
$$
This tetrahedron has four vertices 
\begin{itemize}
\item{
$S = (0,0,0)$;
}
\item{
$R = (\pi,\pi,0)$;
}
\item{
$Q = (0,\pi,\pi)$; and
} 
\item{
$P = (\pi,0,\pi)$.  
}
\end{itemize}
It is bounded by the four affine hyperplanes
\begin{itemize}
\item{
$
\theta_3 - \theta_1 + \theta_2 = 0 \text{;}
$
}
\item{
$
\theta_3 + \theta_1 - \theta_2 = 0 \text{;}
$
}
\item{
$
\theta_3 - \theta_1 - \theta_2 = 0 \text{; and}
$
}
\item{
$
\theta_3 + \theta_1 + \theta_2 = 2 \pi \text{.}
$
}
\end{itemize}

Rewriting the above description of the tetrahedron leads to the following four inequalities
\begin{itemize}
\item{
$
\theta_1 - \theta_2 \leq \theta_3 \text{;}
$
}
\item{
$
- \theta_1 + \theta_2 \leq \theta_3 \text{;}
$
}
\item{
$
\theta_3 \leq \theta_1 + \theta_2 \text{; and}
$
}
\item{
$
\theta_3 \leq 2\pi - (\theta_1 + \theta_2) \text{.}
$
}
\end{itemize}
This completes the proof of Theorem  \ref{SU:2:Trinion:Moment:Map:Thm}.
\end{proof}

\section{The kernel of the Hamiltonian flows}
We now describe the moment map picture from \cite{Jeffrey:Weitsman:1992} and \cite{Jeffrey:Weitsman:1994}.  In keeping with the above notation, put
$$
h_j := \frac{1}{\pi} \theta_j(\cdot) \text{,}
$$
for $j = 1,2,3$.  The \emph{Hamiltonian flows} $\Phi^{h_j}_t(x)$ of the functions $h_j$, at time $t$, and starting at the point $x$, are defined on an open dense set $U$.  It is known that these functions Poisson commute on $U$.

There is an induced action of $\RR^3$ on $U$ that is given by
$$
((\lambda_1, \lambda_2, \lambda_3),x) \mapsto \Phi_{t = 1}^{\sum_j \lambda_j h_j }(x) \text{.}
$$
The \emph{kernel} of this action is the subgroup of $\RR^3$ that acts trivially for all $x$.  It is a lattice $\Lambda$ in $\RR^3$.  There is thus an induced action of a \emph{torus} 
$$\K = \RR^3 / \Lambda$$ 
on $U$.  

In order to identify the lattice, it is important to have a description of the \emph{Hamiltonian flows}.   But, first, the $\operatorname{SU}(2)$-representation space $\mathcal{M}$ is in bijective correspondence with the set 
$$
\{(t_1,t_2,t_3) : [0,1]^3 : | t_1 - t_2 | \leq t_3 \leq t_1 + t_2 \text{, } t_1 + t_2 + t_3 \leq 2 \} \text{.}
$$
This correspondence is induced by the map
$$
x \mapsto (h_1(x),h_2(x),h_3(x))
$$
where the 
$$h_j \left( \cdot \right)= h_{c_j} \left(\cdot \right) \colon \mathcal{M} \rightarrow [0,1]$$ 
are the functions on $\mathcal{M}$ that are defined by the conditions that
$$
f_{c_j} (\cdot) = 2 \cos \left( \pi h_j (\cdot) \right) \text{.}
$$

It follows that 
$$
\overline{\mu(U)} \subseteq \RR^3
$$
the closure of the image of
$$
U  = \bigcap_j h_j^{-1}((0,1))
$$
under the moment map $\mu$ is a convex polyhedron.

Each boundary circle $c_j$ represents a copy of the unitary group $U(1)$.  More precisely, there is an identification
$$
U(1) = \{\exp\left(2 \pi \sqrt{-1} \phi_j\right) : 0 \leq \phi_j \leq 1 \} \text{,}
$$
which allows $U(1)$ to be realized as the Hamiltonian flow of the function $h_j$.

The lattice
$$
\Lambda \subseteq \RR^3 \text{,}
$$
that is defined to be the kernel of the Hamiltonian flow has rank $3$.  It is spanned by the four vectors
$$
\mathbf{e}_1 = (1,0,0) \text{, } \mathbf{e}_2 = (0,1,0) \text{, } \mathbf{e}_3 = (0,0,1) \text{ and } \mathbf{e}_0 = \frac{1}{2}\left( \mathbf{e}_1, \mathbf{e}_2, \mathbf{e}_3 \right) = \left( \frac{1}{2}, \frac{1}{2}, \frac{1}{2} \right) \text{.}
$$

The map $\mu$ is thus the moment map for the torus 
$$
\K = \RR^3 / \Lambda \text{.}
$$

\providecommand{\bysame}{\leavevmode\hbox to3em{\hrulefill}\thinspace}
\providecommand{\MR}{\relax\ifhmode\unskip\space\fi MR }
\providecommand{\MRhref}[2]{%
  \href{http://www.ams.org/mathscinet-getitem?mr=#1}{#2}
}
\providecommand{\href}[2]{#2}

\end{document}